\newtheorem{theorem}{Theorem}
\newtheorem{remark}{Remark}
\newtheorem{definition}{Definition}
\newtheorem{assumption}{Assumption}
\newtheorem{lemma}{Lemma}
\newtheorem{proof}{Proof}
\begin{document}

\title{Private Inputs for Leader-Follower Game with Feedback Stackelberg Strategy}

\author{Yue Sun, Hongdan Li and Huanshui Zhang*
\thanks{This work was supported by the National Natural Science Foundation of China under Grants 61821004 and the Natural Science Foundation of Shandong Province (ZR2021ZD14, ZR2021JQ24), and Science and Technology Project of Qingdao West Coast New Area (2019-32, 2020-20, 2020-1-4), High-level Talent Team Project of Qingdao West Coast New Area (RCTD-JC-2019-05), Key Research and Development Program of Shandong Province (2020CXGC01208).
 *Corresponding author.}
\thanks{Y. Sun is with the School of Control Science and Engineering, Shandong University, Jinan, Shandong 250061, China (e-mail: sunyue9603@163.com). H. Li and H. Zhang are with College of Electrical Engineering and Automation, Shandong University of Science and Technology, Qingdao, Shandong 266590, China (e-mail: hszhang@sdu.edu.cn; lhd200908@163.com).}
}



\maketitle

\begin{abstract}
In this paper, the two-player leader-follower game with private inputs for feedback Stackelberg strategy is considered.
In particular, the follower shares its measurement information with the leader except its historical control inputs
while the leader shares none of the historical control inputs and the measurement information with the follower.
The private inputs of the leader and the follower lead to the main obstacle,
which causes the fact that the estimation gain and the control gain are related with each other,
resulting that the forward and backward Riccati equations are coupled and making the calculation complicated.
By introducing a kind of novel observers through the information structure for the follower and the leader, respectively,
a kind of new observer-feedback Stacklberg strategy is designed.
Accordingly, the above-mentioned obstacle is also avoided.
Moreover, it is found that the cost functions under the presented observer-feedback Stackelberg strategy are asymptotically optimal to the cost functions under the optimal feedback Stackelberg strategy with the feedback form of the state.
Finally, a numerical example is given to show the efficiency of this paper.
\end{abstract}


\begin{IEEEkeywords}
feedback Stackelberg strategy, private inputs, observers, asymptotic optimality.
\end{IEEEkeywords}

\section{Introduction}
In the traditional control model, centralized control is a basic concept and has been extensively studied from time-invariant system to time-variant system and system with time-delay \cite{Anderson1971, Rami2001, HSZhang2015}.
However, with the development of wireless sensor network and artificial intelligence, the centralized control will no longer be applicable due to the fact that the achievable bandwidth would be limited by long delays induced by the communication between the centralized controller \cite{Bauer2013}.
The task of effectively controlling multiple decision-makers systems in the absence of communication channels is increasingly an interesting and challenging control problem.
Correspondingly, the decentralized control of large-scale systems arises accordingly,
which has widespread implementation in electrical power distribution networks, cloud environments, multi-agent systems, reinforcement learning and so on \cite{Blaabjerg2006, Hoogenkamp2022, Ha2004, Gorges2019},
where decisions are made by multiple different decision-makers who have access to different information.

Decentralized control can be traced back to 1970s \cite{Witsenhausen1968, Davison1973, Davison1976}.
The optimization of decentralized control can be divided into two categories.
The first category is the decentralized control for multi-controllers with one associated cost function \cite{Yoshikawa1975, Swigart2010, Liang2023}.
Nayyar studied decentralized stochastic control with partial history observations and control inputs sharing  in \cite{Nayyar2013} by using the common information approach and the $n$-step delayed sharing information structure was investigated in \cite{Nayyar2011}.
\cite{Liang2021} focused on decentralized control in networked control system with asymmetric information by solving the forward and backward coupled Riccati equations through forward iteration, where the historical control inputs was shared unilaterally compared with the information structure shared with each other in \cite{Nayyar2013, Nayyar2011}.
\cite{BWang2019} designed decentralized strategies for mean-field system, which was further shown to have asymptotic robust social optimality.
The other category is the decentralized control for game theory \cite{Pachter2017, Sun2022, Zhi2018}.
Two-criteria LQG decision problems with one-step delay observation sharing pattern for stochastic discrete-time system in Stackelberg strategy and Nash equilibrium strategy were considered in \cite{Basar1978} and \cite{George1982}, respectively.
Necessary conditions for an optimal Stackelberg strategy with output feedback form were given in \cite{Suzumura1993} with incomplete information of the controllers.
\cite{Klompstra2000} investigated feedback risk-sensitive Nash equilibrium solutions for two-player nonzero-sum games with complete state observation and shared historical control inputs.
Static output feedback incentive Stackelberg game with markov jump for linear stochastic systems was taken into consideration in \cite{Mukaidani2016}
and a numerical algorithm was further proposed which guaranteed local convergence.

Noting that the information structure in the decentralized control systems mentioned above has the following feature, that is,
all or part of historical control inputs of the controllers are shared with the other controllers.
However, the case, where the controllers have its own private control inputs,
has not been addressed in decentralized control system, which has applications in  a personalized healthcare setting, in the states of a virtual keyboard user (e.g., Google GBoard users) and in the social robot for second language education of children \cite{Chowdhury2021}.
It should be noted that the information structure where the control information are unavailable to the other decision makers will
 cause the estimation gain depends on the control gain and vice versa,
which means the forward and backward Riccati equations are coupled, and
make the calculation more complicated.
Motivated by \cite{Juan2023}, which focused on the LQ optimal control problem of linear systems with private input and measurement information
by using a kind of novel observers to overcome the obstacle,
in this paper, we are concerned with the feedback Stackelberg strategy for two-player game with private control inputs.
In particular, the follower shares its measurement information to the leader, while the leader doesn't share any information to the follower due to the hierarchical relationship and the historical control inputs for the follower and the leader are both private,
which is the main obstacle in this paper. To overcome the problem,
firstly, the novel observers based on the information structure of each controller are proposed.
Accordingly, a new kind of observer-feedback Stackelberg strategy for the follower and the leader is designed.
Finally, it proved that the associated cost functions for the follower and the leader under the proposed observer-feedback Stackelberg strategy
are asymptotically optimal as compared with the cost functions under the optimal feedback Stackelberg strategy with the feedback form of the state obtained in \cite{Castanon1976}.

The outline of this paper is given as follows.
The problem formulation is given in Section II.
The observers and the observer-feedback Stackelberg strategy with private inputs are designed in Section III.
The asymptotical optimal analysis is shown in Section IV.
Numerical examples are presented in Section V.
Conclusion is given in Section VI.

\emph{Notations}: $\mathbb{R}^n$ represents the space of all real $n$-dimensional vectors.
$A'$ means the transpose of the matrix $A$. A symmetric matrix $A>0$ (or $A\geq 0$) represents that the matrix $A$ is positive definite (or positive semi-definite).
$\|x\|$ denotes the Euclidean norm of vector $x$, i.e., $\|x\|^2=x'x$.
$\|A\|$ denotes the Euclidean norm of matrix $A$, i.e., $\|A\|=\sqrt{\lambda_{max}(A'A)}$.
$\lambda(A)$ represents the eigenvalues of the matrix $A$ and
$\lambda_{max}(A)$ represents the largest eigenvalues of the matrix $A$.
$I$ is an identity matrix with compatible dimension.
$0$ in block matrix represents a zero matrix with appropriate dimensions.

\section{Problem Formulation}
Consider a two-player leader-follower game described as:
\begin{eqnarray}
  x(k+1) &\hspace{-0.8em}=&\hspace{-0.8em} Ax(k)+B_1u_1(k)+B_2u_2(k),\label{1-1}\\
  y_1(k)&\hspace{-0.8em}=&\hspace{-0.8em}H_1x(k),\label{1-2}\\
  y_2(k)&\hspace{-0.8em}=&\hspace{-0.8em}H_2x(k),\label{1-3}
\end{eqnarray}
where $x(k)\in \mathbb{R}^n$ is the state with initial value $x(0)$. $u_1(k)\in \mathbb{R}^{m_1}$ and $u_2(k)\in \mathbb{R}^{m_2}$ are the two control inputs of the follower and the leader, respectively.
$y_i(k)\in \mathbb{R}^{s_i}$ is the measurement information.
$A$, $B_i$ and $H_i$ ($i=1, 2$) are constant matrices with compatible dimensions.

The associated cost functions for the follower and the leader are given by
\begin{eqnarray}
  J_1&\hspace{-0.8em}=&\hspace{-0.8em} \sum\limits^{\infty}_{k=0}[x'(k)Q_1x(k)+u'_1(k)R_{11}u_1(k)\nonumber\\
  &\hspace{-0.8em} &\hspace{-0.8em} +u'_2(k)R_{12}u_2(k)],\label{2-1}\\
  J_2&\hspace{-0.8em}=&\hspace{-0.8em} \sum\limits^{\infty}_{k=0}[x'(k)Q_2x(k)+u'_1(k)R_{21}u_1(k)\nonumber\\
  &\hspace{-0.8em} &\hspace{-0.8em}+u'_2(k)R_{22}u_2(k)],\label{2-2}
\end{eqnarray}
where the weight matrices are such that $Q_i\geq0 $, $R_{ij}\geq0$ ($i\neq j$) and $R_{ii}>0$ ($i, j= 1, 2$) with compatible dimensions.

Feedback Stackelberg strategy with different information structure for controllers had been considered since 1970s in \cite{Castanon1976}, where the information structure satisfied that the controller shared all or part of historical inputs to the other.
To the best of our knowledge, there has been no efficiency technique to deal with the case of private inputs for controllers.
The difficultly lies in the unavailability of other controllers' historical control inputs,
which leads to the fact that the estimation gain depends on the control gain
and makes the forward and backward Riccati equations coupled.
In this paper, our goal is that by designing the novel observers based on the measurements and private inputs for the follower and the leader, respectively,
we will show the proposed observer-feedback Stackelberg strategy is asymptotic optimal to the deterministic case in  \cite{Castanon1976}.
Mathematically, by denoting
\begin{eqnarray}
  Y_i(k)&\hspace{-0.8em}=&\hspace{-0.8em} \{y_i(0), ..., y_i(k)\}, \nonumber\\
  U_i(k-1)&\hspace{-0.8em}=&\hspace{-0.8em} \{u_i(0), ..., u_i(k-1)\},\nonumber\\
  {F}_1(k)&\hspace{-0.8em}=&\hspace{-0.8em}\{Y_1(k), U_1(k-1)\}, \label{sy-1}\\
   {F}_2(k)&\hspace{-0.8em}=&\hspace{-0.8em}\{Y_1(k), Y_2(k), U_2(k-1)\},\label{sy-2}
\end{eqnarray}
we will design the observer-feedback Stackelberg strategy  based on the information $\mathcal{F}_i(k)$, where $u_i(k)$ is $\mathcal{F}_i(k)$-casual for
$i= 1, 2$ in this paper.
The following assumptions will be used in this paper. \smallskip
\begin{assumption}\label{assum1}
System $(A, B)$ is stabilizable with $B=\left[\hspace{-0.4em}
                                          \begin{array}{cc}
                                            B_1 & B_2 \\
                                          \end{array}
                                        \hspace{-0.4em}\right]
$ and system $(A, Q_i)$ ($i=1, 2$) is observable.
\end{assumption}

By denoting the admissible controls sets $\mathcal{U}_i$ (i=1, 2) for the feedback Stackelberg strategy of the follower and the leader:
\begin{eqnarray}\label{3}
\mathcal{U}_1&\hspace{-0.8em}=\{&\hspace{-0.8em}u_1: \Omega\times[0, N]\times \mathbb{R}^n \times U_2 \longrightarrow U_1 \},\nonumber\\
  \mathcal{U}_2&\hspace{-0.8em}=\{&\hspace{-0.8em} u_2: \Omega\times[0, N]\times \mathbb{R}^n \longrightarrow U_2\},
\end{eqnarray}
where $U_1$ and $U_2$ represent the strategy for the follower and the leader, respectively, the definition of the feedback Stackelberg strategy
\cite{Bensoussan2015} is given.
\begin{definition}\label{def1}
$(u^*_1(k), u^*_2(k))\in \mathcal{U}_1\times \mathcal{U}_2$ is the optimal feedback Stackelberg strategy, if there holds that:
\begin{eqnarray}\label{4}
J_1(u^{*}_1(k, u^*_2(k)), u^*_2(k))&\hspace{-0.8em}\leq&\hspace{-0.8em} J_1(u_1(k, u^*_2(k)), u^*_2(k)), \forall u_1\in \mathcal{U}_1,\nonumber\\
J_2(u^{*}_1(k, u^*_2(k)), u^*_2(k))&\hspace{-0.8em}\leq &\hspace{-0.8em}J_2(u^{*}_1(k, u_2(k)), u_2(k)), \forall u_2\in \mathcal{U}_2.\nonumber
\end{eqnarray}
\end{definition}

Firstly, the optimal feedback Stackelberg strategy in deterministic case with perfect information structure is given,
that is, the information structure of the follower and the leader both satisfy
\begin{eqnarray*}
Y_k=\{x(0), ..., x(k), u_i(0), ..., u_i(k-1), \quad i=1,2\}.
\end{eqnarray*}
\begin{lemma}\label{lem1}
Under Assumption \ref{assum1}, the optimal feedback Stackelberg strategy with the information structure for the follower and the leader satisfying $Y_k$,
is given by
\begin{eqnarray}
  u_1(k) &\hspace{-0.8em}=&\hspace{-0.8em} K_1x(k),\label{5-1}\\
  u_2(k) &\hspace{-0.8em}=&\hspace{-0.8em} K_2x(k), \label{5-2}
\end{eqnarray}
where the feedback gain matrices $K_1$ and $K_2$ satisfy
\begin{eqnarray}
K_1 &\hspace{-0.8em}=&\hspace{-0.8em} -\Gamma^{-1}_{1}Y_{1},\label{6-1}\\
K_2 &\hspace{-0.8em}=&\hspace{-0.8em} -\Gamma^{-1}_{2}Y_{2},\label{6-2}
\end{eqnarray}
with
\begin{eqnarray*}
\Gamma_{1} &\hspace{-0.8em}=&\hspace{-0.8em} R_{11}+B'_1P_{1}B_1,\label{7-1} \\
\Gamma_{2} &\hspace{-0.8em}=&\hspace{-0.8em}R_{22}+B'_2M'_{1}P_{2}M_{1}B_2+B'_2S'R_{21}SB_2,\label{7-2}\\
M_{1} &\hspace{-0.8em}=&\hspace{-0.8em}I-B_1S,\label{7-3} \quad S=\Gamma^{-1}_{1}B'_1P_{1},\label{7-4}\\
Y_{1} &\hspace{-0.8em}=&\hspace{-0.8em} B'_1P_{1}A+B'_1P_{1}B_2K_2,\label{7-5}\\
Y_{2} &\hspace{-0.8em}=&\hspace{-0.8em}B'_2M'_{1}P_{2}M_{1}A+B'_2S'R_{21}SA,\label{7-6}
\end{eqnarray*}
where $P_1$ and $P_2$ satisfy the following two-coupled algebraic Riccati equations:
\begin{eqnarray}
P_1&\hspace{-0.8em}=&\hspace{-0.8em}Q_1+(A+B_2K_2)'P_{1}(A+B_2K_2)\nonumber\\
&\hspace{-0.8em}&\hspace{-0.8em}-Y'_{1}\Gamma^{-1}_{1}Y_{1}+K'_2R_{12}K_2, \label{8-1} \\
  P_2&\hspace{-0.8em}=&\hspace{-0.8em}Q_2+A'M'_{1}P_{2}M_{1}A+A'S'R_{21}SA\nonumber\\
 &\hspace{-0.8em}&\hspace{-0.8em} -Y'_{2}\Gamma^{-1}_{2}Y_{2}. \label{8-2}
\end{eqnarray}

The optimal cost functions for feedback Stackelberg strategy are such that
\begin{eqnarray}
  J^*_1 &\hspace{-0.8em}=&\hspace{-0.8em}x'(0)P_1x(0),\label{9-1}\\
  J^*_2 &\hspace{-0.8em}=&\hspace{-0.8em}x'(0)P_2x(0).\label{9-2}
\end{eqnarray}
\end{lemma}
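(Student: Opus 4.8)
The plan is to establish the result by finite-horizon dynamic programming followed by a passage to the infinite-horizon limit, exploiting the stagewise (Markovian) structure of the feedback Stackelberg strategy. I would posit quadratic cost-to-go functions $V_i(x)=x'P_ix$ for $i=1,2$ and verify that, when each player employs the claimed linear feedback, these value functions reproduce themselves through one step of the game, thereby yielding the coupled algebraic Riccati equations (8-1)--(8-2) and the optimal costs (9-1)--(9-2). The argument splits into a follower level and a leader level at a generic stage, consistent with Definition \ref{def1}, in which $u_1$ is allowed to respond to the current $u_2$ while $u_2$ is a pure state feedback.

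First I would treat the follower's problem. Fixing the leader's current action $u_2$ and the cost-to-go weight $P_1$, the follower minimizes $x'Q_1x+u_1'R_{11}u_1+u_2'R_{12}u_2+(Ax+B_1u_1+B_2u_2)'P_1(Ax+B_1u_1+B_2u_2)$ over $u_1$. Since $\Gamma_1=R_{11}+B_1'P_1B_1>0$ by $R_{11}>0$ and $P_1\geq0$, the first-order condition is both necessary and sufficient and gives the follower's reaction $u_1=-S(Ax+B_2u_2)$ with $S=\Gamma_1^{-1}B_1'P_1$. Substituting this reaction into the dynamics produces the reduced closed loop $x(k+1)=M_1(Ax+B_2u_2)$ with $M_1=I-B_1S$, which is precisely the map appearing in (7-3).

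Next I would analyze the leader's problem. The leader anticipates the follower's reaction and therefore minimizes $x'Q_2x+u_1'R_{21}u_1+u_2'R_{22}u_2+x(k+1)'P_2x(k+1)$ with $u_1$ and $x(k+1)$ replaced by the reaction expressions; collecting the resulting quadratic form in $Ax+B_2u_2$ produces the weight $M_1'P_2M_1+S'R_{21}S$, and completing the square in $u_2$ returns the leader's gain $K_2=-\Gamma_2^{-1}Y_2$ with $\Gamma_2$ and $Y_2$ as in (7-2) and (7-6). Positive definiteness of $\Gamma_2$ follows from $R_{22}>0$ together with $P_2\geq0$ and $R_{21}\geq0$. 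Reinserting $u_2=K_2x$ into the follower's reaction reproduces $K_1=-\Gamma_1^{-1}Y_1$ through $Y_1=B_1'P_1(A+B_2K_2)$, and a completion-of-squares identity for each minimized value function delivers exactly (8-1) and (8-2), establishing that the quadratic ansatz is self-consistent at one stage.

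Finally I would iterate this stage computation as a backward recursion over a finite horizon $N$ with zero terminal weight, obtaining Riccati difference equations whose fixed points are (8-1)--(8-2), and then let $N\to\infty$. Here Assumption \ref{assum1} is essential: stabilizability of $(A,B)$ guarantees uniformly bounded finite-horizon value functions, while observability of $(A,Q_i)$ forces the limits to be the stabilizing solutions, so that $A+B_1K_1+B_2K_2$ is Schur and the infinite sums defining $J_1$ and $J_2$ converge to $x'(0)P_ix(0)$. I expect the main obstacle to be the mutual coupling of the two recursions: $K_2$ depends on $P_2$ yet enters the equation for $P_1$ through $Y_1$ and the term $(A+B_2K_2)$, while $S$ and $M_1$ depend on $P_1$ and propagate into the equation for $P_2$. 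Disentangling this interdependence and proving convergence of the coupled recursion to a stabilizing fixed point is the delicate point; since the statement is attributed to \cite{Castanon1976}, I would either invoke the convergence argument developed there or supply a monotonicity and fixed-point argument under Assumption \ref{assum1}.
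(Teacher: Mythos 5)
Your proposal is correct, and at the conceptual level it follows the same route as the paper: a finite-horizon feedback Stackelberg recursion followed by an infinite-horizon limit under Assumption \ref{assum1}. The difference is in what is actually carried out. The paper's proof is essentially a two-step citation: it invokes (18)--(28) of \cite{Castanon1976} (with $\theta(t)=\Pi_1(t)=\Pi_2(t)=0$) for the finite-horizon stagewise solution, and then appeals to Theorem 2 of \cite{HSZhang2015} together with the monotone boundedness theorem to conclude that the finite-horizon Riccati recursions converge to the algebraic equations (\ref{8-1})--(\ref{8-2}). You instead reconstruct the finite-horizon derivation from scratch --- the follower's reaction $u_1=-S(Ax+B_2u_2)$, the reduced map $M_1(Ax+B_2u_2)$, the leader's completion of squares with weight $M_1'P_2M_1+S'R_{21}S$, and the resulting gains and Riccati equations --- and your algebra matches (\ref{6-1})--(\ref{6-2}), (\ref{8-1})--(\ref{8-2}) exactly, which is a genuine gain in self-containedness over the paper. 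Where you are weaker is precisely where the paper leans hardest on its references: the convergence of the coupled backward recursion to a stabilizing fixed point. You correctly flag this as the delicate step, but your first suggested remedy (invoking ``the convergence argument developed'' in \cite{Castanon1976}) does not exist --- that paper treats only the finite horizon --- so only your fallback, a monotonicity-plus-boundedness argument under stabilizability of $(A,B)$ and observability of $(A,Q_i)$, is viable, and that is exactly the role the citation of \cite{HSZhang2015} plays in the paper. In short: your proof fills in the details the paper outsources, and the paper supplies the citation that your proof defers; neither text proves the coupled convergence step inline.
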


\begin{proof}
The optimal feedback Stackelberg strategy for deterministic case with perfect information structure for the follower and the leader in finite-time horizon has been shown in (18)-(28) with $\theta(t)=\Pi_1(t)=\Pi_2(t)=0$ in \cite{Castanon1976}. By using the results in Theorem 2 in \cite{HSZhang2015}, the results obtained in \cite{Castanon1976} can be extended into infinite horizon,
i.e., (18)-(28) in \cite{Castanon1976} are convergent to the algebraic equations obtained in (\ref{6-1})-(\ref{6-2}) and (\ref{8-1})-(\ref{8-2}) in Lemma \ref{lem1} of this paper
by using the monotonic boundedness theorem.  This completes the proof.
\end{proof}
\smallskip
\begin{remark}
$P_1>0$ and $P_2>0$ in (\ref{8-1})-(\ref{8-2}) can be shown accordingly by using Theorem 2 in \cite{HSZhang2015}, which guaranteed the invertibility of $\Gamma_1$ and $\Gamma_2$.
\end{remark}

\begin{remark}
Compared with \cite{Castanon1976}, where the historical control inputs of the follower and the leader are shared with each other,
the historical control inputs of this paper are private, leading to the main obstacle.
\end{remark}

\section{The observer-feedback Stackelberg strategy}
Based on the discussion above, we are in position to consider the leader-follower game with private inputs, i.e., ${u}_i(k)$ is ${F}_i(k)$-casual.
\begin{remark}
As pointed out in \cite{Liang2021}, the information structure in decentralized control, where one of the controllers (C1) doesn't share the historical control inputs to the other controller (C2) while C2 shares its historical control inputs with C1, is a challenge problem due to the control gain and estimator gain are coupled. The difficulty with private inputs for the follower and the leader is even more complicated due to the unavailability of the historical control inputs of each controller.
\end{remark}

Considering the private inputs of the follower and the leader, the observers $\hat{x}_i(k)$ ($i=1, 2$) are designed as follows:
\begin{eqnarray}
\hat{x}_1(k+1)&\hspace{-0.8em}=&\hspace{-0.8em}A\hat{x}_1(k)+B_1u^{\star}_1(k)+B_2K_2\hat{x}_1(k)\nonumber\\
&\hspace{-0.8em}&\hspace{-0.8em}+L_1[y_1(k)-H_1\hat{x}_1(k)],\label{11-1}\\
\hat{x}_2(k+1)&\hspace{-0.8em}=&\hspace{-0.8em}A\hat{x}_2(k)+B_1K_1\hat{x}_2(k)+B_2u^{\star}_2(k)\nonumber\\
&\hspace{-0.8em}&\hspace{-0.8em}+L_2[y_2(k)-H_2\hat{x}_2(k)],\label{11-2}
\end{eqnarray}
where the observer gain matrices $L_1$ and $L_2$ are chosen to make the observers stable.
Accordingly, the observer-feedback Stackelberg strategy is designed as follows:
\begin{eqnarray}
 u^{\star}_1(k) &\hspace{-0.8em}=&\hspace{-0.8em} K_1\hat{x}_1(k),\label{10-1}\\
  u^{\star}_2(k) &\hspace{-0.8em}=&\hspace{-0.8em} K_2\hat{x}_2(k), \label{10-2}
\end{eqnarray}
where $K_1$ and $K_2$ are given in (\ref{6-1})-(\ref{6-2}), respectively.

For convenience of future discussion, some symbols will be given beforehand.
\begin{eqnarray}
  \mathcal{A} &\hspace{-0.8em}=&\hspace{-0.8em} \left[\hspace{-0.3em}
                                                  \begin{array}{cc}
                                                    A+B_2K_2-L_1H_1 &\hspace{-1em} -B_2K_2 \\
                                                    -B_1K_1 &\hspace{-1em} A+B_1K_1-L_2H_2 \\
                                                  \end{array}
                                                \hspace{-0.3em}\right],\label{sy-11}\\
  \mathcal{B} &\hspace{-0.8em}=&\hspace{-0.8em}\left[\hspace{-0.3em}
                                                 \begin{array}{cc}
                                                   -B_1K_1 & -B_2K_2 \\
                                                 \end{array}
                                               \hspace{-0.3em}\right]\nonumber\\
              &\hspace{-0.8em}=&\hspace{-0.8em}\left[\hspace{-0.3em}
                                                 \begin{array}{cc}
                                                   B_1S(A+B_2K_2) & -B_2K_2 \\
                                                 \end{array}
                                               \hspace{-0.3em}\right],\nonumber\\
  \bar{A}&\hspace{-0.8em}=&\hspace{-0.8em}\left[\hspace{-0.3em}
                                            \begin{array}{cc}
                                              A+B_1K_1+B_2K_2 & \mathcal{B} \\
                                              0 & \mathcal{A} \\
                                            \end{array}
                                          \hspace{-0.3em}\right],\nonumber\\
  \tilde{x}(k)&\hspace{-0.8em}=&\hspace{-0.8em}\left[\hspace{-0.3em}
                 \begin{array}{cc}
                  \tilde{x}'_1(k) & \tilde{x}'_2(k) \\
                 \end{array}
               \hspace{-0.3em}\right]',\nonumber\\
  \tilde{x}_i(k)&\hspace{-0.8em}=&\hspace{-0.8em} x(k)-\hat{x}_i(k), \quad i=1, 2.\nonumber
\end{eqnarray}
Subsequently, the stability of the observers $\hat{x}_i(k)$ ($i=1, 2$) and the stability of the closed-loop system (\ref{1-1}) under the designed
observer-feedback Stackelberg strategy (\ref{10-1})-(\ref{10-2}) are shown, respectively.

\begin{theorem}\label{thm1}
If there exist optional gain matrices $L_1$ and $L_2$ such that the matrix $\mathcal{A}$ is stable,
then, the observers $\hat{x}_i(k)$ for $i=1, 2$ are stable with the controllers of the follower and the leader satisfying (\ref{10-1})-(\ref{10-2}), i.e., there holds
\begin{eqnarray}\label{sy-1}
  \lim_{k\rightarrow \infty}\|x(k)-\hat{x}_i(k)\|=0.
\end{eqnarray}
\end{theorem}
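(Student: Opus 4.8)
The plan is to derive an autonomous recursion for the joint estimation error $\tilde{x}(k)$ and then read off convergence directly from the stability hypothesis on $\mathcal{A}$. First I would write out the true dynamics (\ref{1-1}) under the applied controls $u^{\star}_1(k)=K_1\hat{x}_1(k)$ and $u^{\star}_2(k)=K_2\hat{x}_2(k)$ from (\ref{10-1})--(\ref{10-2}), giving
\[
x(k+1)=Ax(k)+B_1K_1\hat{x}_1(k)+B_2K_2\hat{x}_2(k),
\]
and subtract the observer recursions (\ref{11-1})--(\ref{11-2}), using the measurement identity $y_i(k)-H_i\hat{x}_i(k)=H_i\tilde{x}_i(k)$.

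For the follower's error the two $B_1K_1\hat{x}_1(k)$ contributions cancel, leaving a residual cross term $B_2K_2(\hat{x}_2(k)-\hat{x}_1(k))$. The crucial step is the elementary identity $\hat{x}_j(k)-\hat{x}_i(k)=\tilde{x}_i(k)-\tilde{x}_j(k)$, which rewrites this coupling purely in terms of the errors and yields
\[
\tilde{x}_1(k+1)=(A+B_2K_2-L_1H_1)\tilde{x}_1(k)-B_2K_2\tilde{x}_2(k).
\]
An entirely analogous computation for the leader, where now the $B_2K_2\hat{x}_2(k)$ terms cancel, gives
\[
\tilde{x}_2(k+1)=-B_1K_1\tilde{x}_1(k)+(A+B_1K_1-L_2H_2)\tilde{x}_2(k).
\]
Stacking these two identities I would recognize exactly the block matrix $\mathcal{A}$ defined in (\ref{sy-11}), so that the joint error satisfies the autonomous linear system $\tilde{x}(k+1)=\mathcal{A}\tilde{x}(k)$.

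Finally, since $L_1$ and $L_2$ are chosen by hypothesis so that $\mathcal{A}$ is stable, it follows that $\tilde{x}(k)=\mathcal{A}^{k}\tilde{x}(0)\to 0$ as $k\to\infty$, and in particular each block $\tilde{x}_i(k)=x(k)-\hat{x}_i(k)\to 0$, which is the desired conclusion.

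The main obstacle I anticipate is purely the bookkeeping of the two distinct control laws: the true state is driven simultaneously by $K_1\hat{x}_1$ and $K_2\hat{x}_2$, whereas each observer internally \emph{predicts} the other player's action through its own estimate (the $B_2K_2\hat{x}_1$ term in (\ref{11-1}) and the $B_1K_1\hat{x}_2$ term in (\ref{11-2})). Tracking the signs correctly and seeing that the leftover cross terms collapse, via the error-difference identity, into precisely the off-diagonal blocks $-B_2K_2$ and $-B_1K_1$ of $\mathcal{A}$ is the only nonroutine part. Once the recursion $\tilde{x}(k+1)=\mathcal{A}\tilde{x}(k)$ is established, the stability conclusion is immediate from the standing assumption that $\mathcal{A}$ is stable.
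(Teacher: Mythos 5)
Your proposal is correct and follows essentially the same route as the paper: derive the joint error recursion $\tilde{x}(k+1)=\mathcal{A}\tilde{x}(k)$ by combining the closed-loop dynamics with the observer equations (your error-difference identity $\hat{x}_j(k)-\hat{x}_i(k)=\tilde{x}_i(k)-\tilde{x}_j(k)$ is just a slight reorganization of the paper's substitution of $x(k+1)$ expressed in terms of $x(k)$, $\tilde{x}_1(k)$, $\tilde{x}_2(k)$), and then conclude from the assumed stability of $\mathcal{A}$. The sign bookkeeping and the resulting off-diagonal blocks $-B_2K_2$ and $-B_1K_1$ match the paper's matrix $\mathcal{A}$ exactly, so no gap remains.
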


\begin{proof}
By substituting the observer-feedback controllers (\ref{10-1})-(\ref{10-2}) into (\ref{1-1}), then $x(k+1)$ is recalculated as:
\begin{eqnarray}\label{sy-2}
x(k+1)&\hspace{-0.8em}=&\hspace{-0.8em} Ax(k)+B_1K_1\hat{x}_1(k)+B_2K_2\hat{x}_(k)\nonumber\\
&\hspace{-0.8em}=&\hspace{-0.8em}[A+B_1K_1+B_2K_2]x(k)-B_1K_1\tilde{x}_1(k)\nonumber\\
&\hspace{-0.8em}&\hspace{-0.8em}-B_2K_2\tilde{x}_2(k).
\end{eqnarray}
Accordingly, by adding (\ref{10-1})-(\ref{10-2}) into the observers (\ref{11-1})-(\ref{11-2}) and combining with (\ref{sy-2}), the derivation of $\tilde{x}_i(k)$ for $i=1, 2$ are given as
\begin{eqnarray*}
\tilde{x}_1(k+1)&\hspace{-0.8em}=&\hspace{-0.8em}(A+B_2K_2-L_1H_1)\tilde{x}_1(k)-B_2K_2\tilde{x}_2(k),\\
\tilde{x}_2(k+1)&\hspace{-0.8em}=&\hspace{-0.8em}(A+B_1K_1-L_2H_2)\tilde{x}_1(k)-B_1K_1\tilde{x}_1(k),
\end{eqnarray*}
that is
\begin{eqnarray}\label{12-1}
 \tilde{x}(k+1)&\hspace{-0.8em}=&\hspace{-0.8em}\mathcal{A}\tilde{x}(k).
\end{eqnarray}
Subsequently, if there exist matrices $L_1$ and $L_2$ making $\mathcal{A}$ stable, then, the stability of the matrix $\mathcal{A}$ means that
\begin{eqnarray*}
  \lim_{k\rightarrow \infty} \tilde{x}(k)=0,
\end{eqnarray*}
i.e., (\ref{sy-1}) is established. That is to say, the observers $\hat{x}_i(k)$ are stable under (\ref{10-1})-(\ref{10-2}).
The proof is completed.
\end{proof}

\begin{remark}
Noting that in Theorem \ref{thm1} the key point lies in that how to select $L_i$ ($i=1, 2$) so that the eigenvalues of
the matrix $\mathcal{A}$ are within the unit circle.
The following analysis gives an  method to find $L_i$.

According to the Lyapunov stability criterion, i.e., $\mathcal{A}$ is stable if and only if for any positive definite matrix $Q$,
$\mathcal{A}'P\mathcal{A}-P =-Q$ admits a solution such that $P>0$.
Thus, if there exists a $P>0$ such that
\begin{eqnarray}
\mathcal{A}'P\mathcal{A}-P<0,
\end{eqnarray}
then $\mathcal{A}$ is stable. Following from the elementary row transformation, one has
\begin{eqnarray*}
&\hspace{-0.8em}&\hspace{-0.8em}\left(
  \begin{array}{cc}
    I & I \\
    0 & I \\
  \end{array}
\right)\left(
  \begin{array}{cc}
    I & 0 \\
    0 & \mathcal{A}' \\
  \end{array}
\right)\left(
  \begin{array}{cc}
    -P & \mathcal{A}'P \\
    P\mathcal{A} & -P \\
  \end{array}
\right)\left(
  \begin{array}{cc}
    I & 0 \\
    0 & \mathcal{A} \\
  \end{array}
\right)\nonumber\\
&\hspace{-0.8em}&\hspace{-0.8em}\times \left(
  \begin{array}{cc}
    I & 0 \\
    I & I \\
  \end{array}
\right)=\left(
  \begin{array}{cc}
    \mathcal{A}'P\mathcal{A}-P & 0 \\
    0 & -\mathcal{A}'P\mathcal{A} \\
  \end{array}
\right)<0,
\end{eqnarray*}
that is, $\mathcal{A}'P\mathcal{A}-P<0$ is equivalent to the following matrix inequality
\begin{eqnarray}\label{0908-1}
\left(
  \begin{array}{cc}
    -P & \mathcal{A}'P \\
    P\mathcal{A} & -P \\
  \end{array}
\right)<0.
\end{eqnarray}
Noting that $\mathcal{A}$ is related with $L_i$,
in order to use the linear matrix inequality (LMI) Toolbox in Matlab to find $L_i$, (\ref{0908-1}) will be transmit into a LMI form.
Let
\begin{eqnarray*}
P=\left(
  \begin{array}{cc}
    P & 0 \\
    0 & P \\
  \end{array}
\right), \quad \tilde{W}=\left(
  \begin{array}{cc}
    W_1 & 0 \\
    0 & W_2 \\
  \end{array}
\right),
\end{eqnarray*}
and rewrite $\mathcal{A}$ in (\ref{sy-11}) as $\mathcal{A}=\tilde{A}-\tilde{L}\tilde{H}$, where
\begin{eqnarray*}
\mathcal{A} &\hspace{-0.8em}=&\hspace{-0.8em}\left(
                                                  \begin{array}{cc}
                                                    A+B_2K_2 &\hspace{-1em} -B_2K_2 \\
                                                    -B_1K_1 &\hspace{-1em} A+B_1K_1 \\
                                                  \end{array}
                                                \right),\\
\tilde{L}&\hspace{-0.8em}=&\hspace{-0.8em}\left(
  \begin{array}{cc}
    L_1 & 0 \\
    0 & L_2 \\
  \end{array}
\right),\quad  \tilde{H}=\left(
  \begin{array}{cc}
    H_1 & 0 \\
    0 & H_2 \\
  \end{array}
\right).
\end{eqnarray*}
To this end, we have
\begin{eqnarray*}
  P\mathcal{A}=P\tilde{A}-P\tilde{L}\tilde{H}=P\tilde{A}-\tilde{W}\tilde{H},
\end{eqnarray*}
with $\tilde{W}=P\tilde{L}$.
Based on the discussion above, it concludes that $\mathcal{A}$ is stable if there exists a $P>0$ such that the following LMI:
\begin{eqnarray}\label{0908-2}
\left(
  \begin{array}{cc}
    -P & (P\tilde{A}-\tilde{W}\tilde{H})'\\
    P\tilde{A}-\tilde{W}\tilde{H} & -P \\
  \end{array}
\right)<0.
\end{eqnarray}

In this way, by using the LMI Toolbox in Matlab, $L_i$ can be found according, which stabilizes $\mathcal{A}$ where $L_i=P^{-1}W_i$.

\end{remark}

Under the observer-feedback controllers (\ref{10-1})-(\ref{10-2}), the stability of (\ref{1-1}) is given.
\begin{theorem}\label{thm2}
Under Assumption \ref{assum1} and if there exists $L_i$ stabilizing $\mathcal{A}$, then the closed-loop system (\ref{1-1}) is stable with the observer-feedback controllers
(\ref{10-1})-(\ref{10-2}).
\end{theorem}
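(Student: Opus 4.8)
The plan is to augment the true state with the two estimation errors, read off a block upper-triangular recursion, and thereby reduce the stability of the closed loop to the stability of two decoupled diagonal blocks, one of which is already handled by Theorem \ref{thm1}.

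First I would introduce the augmented state $\bar{x}(k)=\left[\begin{array}{cc} x'(k) & \tilde{x}'(k)\end{array}\right]'$, where $\tilde{x}(k)$ collects the errors $\tilde{x}_i(k)=x(k)-\hat{x}_i(k)$. Substituting the observer-feedback controllers (\ref{10-1})--(\ref{10-2}) into (\ref{1-1}) gives the recalculated state recursion used in the proof of Theorem \ref{thm1}, namely $x(k+1)=(A+B_1K_1+B_2K_2)x(k)-B_1K_1\tilde{x}_1(k)-B_2K_2\tilde{x}_2(k)=(A+B_1K_1+B_2K_2)x(k)+\mathcal{B}\tilde{x}(k)$, while the error dynamics obey (\ref{12-1}). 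Stacking these yields $\bar{x}(k+1)=\bar{A}\,\bar{x}(k)$ with $\bar{A}$ the block upper-triangular matrix already defined above. Since $\bar{A}$ is block upper-triangular, its spectrum is the union of the spectra of the diagonal blocks $A+B_1K_1+B_2K_2$ and $\mathcal{A}$. Because $L_i$ is chosen so that $\mathcal{A}$ is stable by hypothesis (Theorem \ref{thm1}), it remains only to prove that $A+B_1K_1+B_2K_2$ is Schur stable.

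The main step, and the part I expect to be the principal obstacle, is establishing the stability of $A+B_1K_1+B_2K_2$. I would obtain it from the optimality in Lemma \ref{lem1} rather than by a direct spectral computation. Under the true-state feedback strategy $u_1=K_1x$, $u_2=K_2x$ the optimal costs (\ref{9-1})--(\ref{9-2}) are finite for every initial condition. Finiteness of the nonnegative series (\ref{2-1}) and (\ref{2-2}) forces every summand to vanish as $k\to\infty$: in particular $Q_i^{1/2}x(k)\to 0$, and since $R_{11}>0$ and $R_{22}>0$ one gets $K_1x(k)\to 0$ and $K_2x(k)\to 0$. Writing $x(k+1)=Ax(k)+v(k)$ with $v(k)=(B_1K_1+B_2K_2)x(k)\to 0$, I would invoke the observability of $(A,Q_i)$ in Assumption \ref{assum1}, equivalently detectability of $(A,Q_i^{1/2})$, to choose $L$ making $A-LQ_i^{1/2}$ stable, and then rewrite $x(k+1)=(A-LQ_i^{1/2})x(k)+LQ_i^{1/2}x(k)+v(k)$. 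As both driving terms tend to zero and $A-LQ_i^{1/2}$ is stable, $x(k)\to 0$ for every initial state, which for a linear time-invariant recursion is exactly Schur stability of $A+B_1K_1+B_2K_2$. The delicate point is that observability is assumed for the open-loop pair $(A,Q_i)$, not for the closed loop, so the output-injection rewriting is what legitimately transfers the convergence of $Q_i^{1/2}x(k)$ and of $v(k)$ into convergence of the full state.

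Finally I would assemble the two facts. Both diagonal blocks of $\bar{A}$ are stable, hence $\bar{A}$ is stable and $\bar{x}(k)\to 0$; in particular $x(k)\to 0$ for the closed-loop system (\ref{1-1}) driven by (\ref{10-1})--(\ref{10-2}), which is precisely the claimed stability. This completes the plan.
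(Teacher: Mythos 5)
Your proposal is correct and follows essentially the same route as the paper: substitute the observer-feedback controllers into (\ref{1-1}), stack the state with the errors $\tilde{x}(k)$ to get $\bar{x}(k+1)=\bar{A}\bar{x}(k)$, and use the block upper-triangular structure of $\bar{A}$ to reduce everything to the stability of the diagonal blocks $A+B_1K_1+B_2K_2$ and $\mathcal{A}$. The only difference is that where the paper merely asserts the stability of $A+B_1K_1+B_2K_2$ from the stabilizability of $(A,B)$ and observability of $(A,Q_i)$, you supply a complete justification (finiteness of the optimal costs in Lemma \ref{lem1} forcing $Q_i^{1/2}x(k)\to 0$ and $K_ix(k)\to 0$, then an output-injection/detectability argument) — a useful elaboration of that step, but not a different approach.
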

\begin{proof}
According to (\ref{sy-2}), the closed-loop system (\ref{1-1}) is reformulated as
\begin{eqnarray}\label{sy-3}
x(k+1)&\hspace{-0.8em}=&\hspace{-0.8em} [A+B_1K_1+B_2K_2]x(k)+\mathcal{B}\tilde{x}(k).
\end{eqnarray}
Together with (\ref{12-1}), we have
\begin{eqnarray}\label{sy-4}
\left[
  \begin{array}{c}
    x(k+1) \\
    \tilde{x}(k+1) \\
  \end{array}
\right]&\hspace{-0.8em}=&\hspace{-0.8em}\bar{A}\left[
  \begin{array}{c}
    x(k) \\
    \tilde{x}(k) \\
  \end{array}
\right].
\end{eqnarray}
The stability of $A+B_1K_1+B_2K_2$ is guaranteed by the stabilizability of $(A, B)$ and the observability of $(A, Q_i)$ for $i=1, 2$. Following from Theorem \ref{thm1}, $\mathcal{A}$ is stabilized by selecting appropriate gain matrices $L_1$ and $L_2$. Subsequently, the stability of the closed-loop system (\ref{1-1}) is derived.
This completes the proof.
\end{proof}

\section{The asymptotical optimal analysis}
The stability of the state and the observers, i.e., $x(k)$ and $\hat{x}_i$ for $i=1, 2$ has been shown in Theorem \ref{thm1} and Theorem \ref{thm2} under the observer-feedback controllers (\ref{10-1})-(\ref{10-2}).
To shown the rationality of the design of the observer-feedback controllers (\ref{10-1})-(\ref{10-2}),
the asymptotical optimal analysis relating with the cost functions under (\ref{10-1})-(\ref{10-2}) is given.
To this end, denote the cost functions for the follower and the leader satisfying
\begin{eqnarray}
  J_1(s, M)&\hspace{-0.8em}=&\hspace{-0.8em} \sum\limits^{M}_{k=s}[x'(k)Q_1x(k)+u'_1(k)R_{11}u_1(k)\nonumber\\
  &\hspace{-0.8em}&\hspace{-0.8em}+u'_2(k)R_{12}u_2(k)],\label{13-1}\\
  J_2(s, M)&\hspace{-0.8em}=&\hspace{-0.8em} \sum\limits^{M}_{k=s}[x'(k)Q_2x(k)+u'_1(k)R_{21}u_1(k)\nonumber\\
  &\hspace{-0.8em}&\hspace{-0.8em}+u'_2(k)R_{22}u_2(k)].\label{13-2}
\end{eqnarray}

Now, we are in position to show that the observer-feedback Stackelberg strategy (\ref{10-1})-(\ref{10-2}) is asymptotical optimal to the optimal feedback Stackelberg strategy presented in Lemma \ref{lem1}.
\begin{theorem}\label{thm3}
Under Assumption \ref{assum1}, the corresponding cost functions (\ref{13-1})-(\ref{13-2}) under the observer-feedback Stackelberg strategy (\ref{10-1})-(\ref{10-2}) with $L_i$ ($i=1, 2$) selected from Theorem \ref{thm1} are given by
\begin{eqnarray}
  J^{\star}_1(s, \infty) &\hspace{-0.8em}=&\hspace{-0.8em} x'(s)P_1x(s)\nonumber\\
   &\hspace{-0.8em} &\hspace{-0.8em}+\sum\limits^{\infty}_{k=s} \left[
  \begin{array}{c}
    x(k) \\
    \tilde{x}(k) \\
  \end{array}
\right]'\left[
          \begin{array}{cc}
            0 & T_1 \\
            T'_1 & S_1\\
          \end{array}
        \right]
\left[
  \begin{array}{c}
    x(k) \\
    \tilde{x}(k) \\
  \end{array}
\right],\label{14-1}\\
  J^{\star}_2(s, \infty) &\hspace{-0.8em}=&\hspace{-0.8em} x'(s)P_2x(s)\nonumber\\
   &\hspace{-0.8em} &\hspace{-0.8em}+\sum\limits^{\infty}_{k=s} \left[
  \begin{array}{c}
    x(k) \\
    \tilde{x}(k) \\
  \end{array}
\right]'\left[
          \begin{array}{cc}
            0 & T_2\\
            T'_2 & S_2\\
          \end{array}
        \right]
\left[
  \begin{array}{c}
    x(k) \\
    \tilde{x}(k) \\
  \end{array}
\right],\label{14-2}
\end{eqnarray}
where
\begin{eqnarray*}
S_1 &\hspace{-0.8em}=&\hspace{-0.8em}  \mathcal{B}'P_1 \mathcal{B}-\left[
                                         \begin{array}{cc}
                                           K'_1R_{11}K_1 & 0 \\
                                           0 & K'_2R_{12}K_2 \\
                                         \end{array}
                                       \right],\\
S_2 &\hspace{-0.8em}=&\hspace{-0.8em}  \mathcal{B}'P_2 \mathcal{B}-\left[
                                         \begin{array}{cc}
                                           K'_1R_{21}K_1 & 0 \\
                                           0 & K'_2R_{22}K_2 \\
                                         \end{array}
                                       \right],\\
T_1&\hspace{-0.8em}=&\hspace{-0.8em}(A+B_2K_2)'M'_1P_1 \mathcal{B},\\
T_2&\hspace{-0.8em}=&\hspace{-0.8em}(A+B_2K_2)'M'_1P_2 \mathcal{B}.
\end{eqnarray*}
Moreover, the differences, which are denoted as $\delta J_1(s, \infty)$ and $\delta J_2(s, \infty)$, between (\ref{14-1})-(\ref{14-2}) and the optimal cost functions (\ref{9-1})-(\ref{9-2}) obtained in Lemma \ref{lem1} under the optimal feedback Stackelberg strategy  are such that
\begin{eqnarray}
  \delta J_1(s, \infty) &\hspace{-0.8em}=&\hspace{-0.8em} J^{\star}_1(s, \infty)-J^*_1(s, \infty)\nonumber\\
  &\hspace{-0.8em}=&\hspace{-0.8em}\sum\limits^{\infty}_{k=s} \left[
  \begin{array}{c}
    x(k) \\
    \tilde{x}(k) \\
  \end{array}
\right]'\left[
          \begin{array}{cc}
            0 & T_1\\
            T'_1 & S_1\\
          \end{array}
        \right]
\left[
  \begin{array}{c}
    x(k) \\
    \tilde{x}(k) \\
  \end{array}
\right],\label{15-1}\\
\delta J_2(s, \infty) &\hspace{-0.8em}=&\hspace{-0.8em} J^{\star}_2(s, \infty)-J^*_2(s, \infty)\nonumber\\
  &\hspace{-0.8em}=&\hspace{-0.8em}\sum\limits^{\infty}_{k=s} \left[
  \begin{array}{c}
    x(k) \\
    \tilde{x}(k) \\
  \end{array}
\right]'\left[
          \begin{array}{cc}
            0 & T_2\\
            T'_2 & S_2\\
          \end{array}
        \right]
\left[
  \begin{array}{c}
    x(k) \\
    \tilde{x}(k) \\
  \end{array}
\right].\label{15-2}
\end{eqnarray}
\end{theorem}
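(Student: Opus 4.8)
The plan is to establish (\ref{14-1})--(\ref{14-2}) by a completion-of-squares (telescoping) argument driven by the Lyapunov-type functions $V_i(k)=x'(k)P_ix(k)$ for $i=1,2$, and then to read off (\ref{15-1})--(\ref{15-2}) simply by subtracting the optimal costs (\ref{9-1})--(\ref{9-2}). First I would rewrite the observer-feedback strategy (\ref{10-1})--(\ref{10-2}) in terms of the augmented variable $z(k)=[x'(k)\ \tilde x'(k)]'$ via $u_1(k)=K_1x(k)-K_1\tilde x_1(k)$ and $u_2(k)=K_2x(k)-K_2\tilde x_2(k)$, so that each per-stage cost $\ell_i(k)=x'Q_ix+u_1'R_{i1}u_1+u_2'R_{i2}u_2$ becomes a quadratic form $z'(k)\Theta_iz(k)$. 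Using the closed-loop recursions (\ref{sy-3}) and (\ref{12-1}) --- equivalently $z(k+1)=\bar A z(k)$ from (\ref{sy-4}) --- I would express the one-step increment $\ell_i(k)+V_i(k+1)-V_i(k)$ as a single quadratic form in $z(k)$ and split $\ell_i(k)=[V_i(k)-V_i(k+1)]+q_i(k)$, where $q_i(k)$ is the residual form to be identified with the quadratic term appearing in (\ref{14-1})--(\ref{14-2}).

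The crux is to show that the $x$--$x$ (upper-left) block of this increment vanishes; this is precisely what forces the leading term of the cost to be $x'(s)P_ix(s)$. Writing $\Phi=A+B_1K_1+B_2K_2$ and using $K_1=-S(A+B_2K_2)$ together with $M_1=I-B_1S$, one obtains the key factorization $\Phi=M_1(A+B_2K_2)$; the upper-left block then equals $Q_i+K_1'R_{i1}K_1+K_2'R_{i2}K_2+\Phi'P_i\Phi-P_i$, which I would verify is zero directly from the Riccati equations (\ref{8-1})--(\ref{8-2}). For the follower this uses $\Gamma_1K_1=-Y_1$ to collapse $K_1'R_{11}K_1+K_1'B_1'P_1B_1K_1=K_1'\Gamma_1K_1=Y_1'\Gamma_1^{-1}Y_1$, reproducing (\ref{8-1}). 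For the leader I would first regroup $K_1'R_{21}K_1+\Phi'P_2\Phi=(A+B_2K_2)'(M_1'P_2M_1+S'R_{21}S)(A+B_2K_2)$, recognise $Y_2=B_2'(M_1'P_2M_1+S'R_{21}S)A$ and $\Gamma_2=R_{22}+B_2'(M_1'P_2M_1+S'R_{21}S)B_2$, and then complete the square in $K_2$ via $\Gamma_2K_2=-Y_2$ to recover (\ref{8-2}). The remaining off-diagonal and lower-right blocks of the increment are collected into $T_i$ and $S_i$, respectively.

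Finally I would sum the identity $\ell_i(k)=[V_i(k)-V_i(k+1)]+q_i(k)$ over $k=s,\dots,M$; the Lyapunov part telescopes to $x'(s)P_ix(s)-x'(M+1)P_ix(M+1)$, and letting $M\to\infty$ the boundary term vanishes because Theorem \ref{thm2} guarantees $x(k)\to0$, while Theorem \ref{thm1} guarantees $\tilde x(k)\to0$; since $\bar A$ is stable the decay is exponential, so the residual sum $\sum q_i(k)$ converges absolutely. This yields (\ref{14-1})--(\ref{14-2}). As Lemma \ref{lem1} identifies $J_i^*(s,\infty)=x'(s)P_ix(s)$, subtracting it produces (\ref{15-1})--(\ref{15-2}) at once.

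The main obstacle I anticipate is the leader's upper-left cancellation: unlike the follower, the leader's running weights $R_{21},R_{22}$ do not align directly with the blocks of the Riccati equation for $P_2$, and the cancellation only emerges after the composite matrix $M_1'P_2M_1+S'R_{21}S$ is introduced and the dependence of $\Gamma_2$ and $Y_2$ on it is made explicit. Keeping the bookkeeping of the cross terms consistent --- so that exactly the stated forms of $T_i$ and $S_i$ emerge from $q_i(k)$ --- is the delicate part, since it is there that the contributions of the state-weight blocks and the dynamics overlap.
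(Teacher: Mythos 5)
Your proposal takes essentially the same route as the paper's own proof: the telescoping argument with $V_i(k)=x'(k)P_ix(k)$, the factorization $A+B_1K_1+B_2K_2=M_1(A+B_2K_2)$ coming from $K_1=-S(A+B_2K_2)$, the Riccati-based cancellation of the $x$--$x$ block (your follower computation via $\Gamma_1K_1=-Y_1$ and your leader regrouping through the composite matrix $M_1'P_2M_1+S'R_{21}S$ are exactly the manipulations the paper performs in (\ref{17}) and (\ref{19-1})--(\ref{19})), then summation, the vanishing boundary term, and subtraction of $J_i^*(s,\infty)=x'(s)P_ix(s)$. The $x$--$x$ cancellation you describe is correct for both players, and your observation that exponential decay of $(x,\tilde{x})$ gives absolute convergence of the residual sum is also sound.

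However, the step you yourself flag as delicate is precisely where the argument, as stated, does not close: the residual blocks do \emph{not} come out equal to the stated $T_i$ and $S_i$. Write $u_1^{\star}=K_1(x-\tilde{x}_1)$, $u_2^{\star}=K_2(x-\tilde{x}_2)$, and set $N_i\doteq\left[\begin{array}{cc}K_1'R_{i1}K_1 & K_2'R_{i2}K_2\end{array}\right]$, $D_i\doteq\mathrm{diag}(K_1'R_{i1}K_1,\,K_2'R_{i2}K_2)$. Expanding the per-stage cost $\ell_i(k)$ and using the closed-loop recursion (\ref{16}) together with the verified identity $P_i-\Phi'P_i\Phi=Q_i+K_1'R_{i1}K_1+K_2'R_{i2}K_2$ (where $\Phi=M_1(A+B_2K_2)$), the true residual is
\begin{eqnarray*}
q_i(k)&\doteq&\ell_i(k)-V_i(k)+V_i(k+1)\\
&=&2x'(k)\,(T_i-N_i)\,\tilde{x}(k)\\
&&+\,\tilde{x}'(k)\,\bigl(\mathcal{B}'P_i\mathcal{B}+D_i\bigr)\,\tilde{x}(k),
\end{eqnarray*}
whereas the theorem (and your plan) asserts off-diagonal block $T_i$ and lower-right block $S_i=\mathcal{B}'P_i\mathcal{B}-D_i$. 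The discrepancy is caused by the $x$--$\tilde{x}_j$ cross terms generated by $u_j^{\star\prime}R_{ij}u_j^{\star}$, which your ``collection'' step silently drops. Notably, the paper's own proof makes exactly this slip in passing from (\ref{17}) to (\ref{18}) (and from (\ref{19}) to (\ref{20})): that step implicitly uses $x'K_j'R_{ij}K_jx=u_j^{\star\prime}R_{ij}u_j^{\star}+\tilde{x}_j'K_j'R_{ij}K_j\tilde{x}_j$, which would require $\hat{x}_j'K_j'R_{ij}K_j\tilde{x}_j=0$ and is false in general. So your proposal is faithful to the paper's proof, including its flaw; if you carry out the bookkeeping honestly you will land on (\ref{14-1})--(\ref{15-2}) with $T_i$ replaced by $T_i-N_i$ and $S_i$ replaced by $\mathcal{B}'P_i\mathcal{B}+D_i$. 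The downstream conclusion (Theorem \ref{thm4}) is unaffected by this correction, since the corrected residual is still a fixed quadratic form evaluated along exponentially decaying trajectories, but the explicit formulas of Theorem \ref{thm3} hold only in the corrected form.
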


\begin{proof}
The proof will be divided into two parts. The first part is to consider the cost function of the follower under the observer-feedback controllers (\ref{10-1})-(\ref{10-2}).
Following from (\ref{sy-2}), system (\ref{1-1}) it can be rewritten as
\begin{eqnarray}\label{16}
x(k+1)&\hspace{-0.8em}=&\hspace{-0.8em} [A+B_1K_1+B_2K_2]x(k)-B_1K_1\tilde{x}_1(k)\nonumber\\
&\hspace{-0.8em}&\hspace{-0.8em}-B_2K_2\tilde{x}_2(k)\nonumber\\
&\hspace{-0.8em}=&\hspace{-0.8em}(I-B_1S)(A+B_2K_2)x(k)+\mathcal{B}\tilde{x}(k),
\end{eqnarray}
where $K_1$ in (\ref{6-1}) have been used in the derivation of the last equality.

Firstly, we will prove $J^{\star}_1(s, \infty)$ satisfies (\ref{14-1}). Combing (\ref{16}) with (\ref{8-1}), one has
\begin{eqnarray}\label{17}
&\hspace{-0.8em}&\hspace{-0.8em} x'(k)P_1x(k)-x(k+1)'P_1x(k+1)\nonumber\\
=&\hspace{-0.8em}&\hspace{-0.8em}x'(k)[P_1-(A+B_2K_2)'(I-B_1S)'P_1(I-B_1S)\nonumber\\
&\hspace{-0.8em}&\hspace{-0.8em}\times(A+B_2K_2)]x(k)-x'(k)(A+B_2K_2)'M'_1P_1\mathcal{B}\tilde{x}(k)\nonumber\\
&\hspace{-0.8em}&\hspace{-0.8em}-\tilde{x}'(k)\mathcal{B}'P_1M_1(A+B_2K_2)x(k)
-\tilde{x}'(k)\mathcal{B}'P_1\mathcal{B}\tilde{x}(k)\nonumber\\
=&\hspace{-0.8em}&\hspace{-0.8em}x'(k)[Q_1+K'_2R_{12}K_2-(A+B_2K_2)'P_1B_1\Gamma^{-1}_{1}B'_{1}P_1\nonumber\\
&\hspace{-0.8em}&\hspace{-0.8em}\times (A+B_2K_2)+(A+B_2K_2)'P_1B_1S(A+B_2K_2)\nonumber\\
&\hspace{-0.8em}&\hspace{-0.8em}+(A+B_2K_2)'S'B'_{1}P_1(A+B_2K_2)-(A+B_2K_2)'S'\nonumber\\
&\hspace{-0.8em}&\hspace{-0.8em}\times B'_{1}P_1B_1S(A+B_2K_2)]x(k)-x'(k)(A+B_2K_2)'M'_1\nonumber\\
&\hspace{-0.8em}&\hspace{-0.8em}\times P_1\mathcal{B}\tilde{x}(k)-\tilde{x}'(k)\mathcal{B}'P_1M_1(A+B_2K_2)x(k)\nonumber\\
&\hspace{-0.8em}&\hspace{-0.8em}-\tilde{x}'(k)\mathcal{B}'P_1\mathcal{B}\tilde{x}(k)\nonumber\\
=&\hspace{-0.8em}&\hspace{-0.8em}x'(k)[Q_1+K'_2R_{12}K_2+K'_1(R_{11}+B'_{1}P_1B_1)K_1\nonumber\\
&\hspace{-0.8em}&\hspace{-0.8em}-K'_1B'_{1}P_1B_1K_1]x(k)-x'(k)(A+B_2K_2)'M'_1P_1\mathcal{B}\tilde{x}(k)\nonumber\\
&\hspace{-0.8em}&\hspace{-0.8em}-\tilde{x}'(k)\mathcal{B}'P_1M_1(A+B_2K_2)x(k)
-\tilde{x}'(k)\mathcal{B}'P_1\mathcal{B}\tilde{x}(k)\nonumber\\
=&\hspace{-0.8em}&\hspace{-0.8em}x'(k)[Q_1+K'_1R_{11}K_1+K'_2R_{12}K_2]x(k)\nonumber\\
&\hspace{-0.8em}&\hspace{-0.8em}-x'(k)(A+B_2K_2)'M'_1P_1\mathcal{B}\tilde{x}(k)-\tilde{x}'(k)\mathcal{B}'P_1M_1\nonumber\\
&\hspace{-0.8em}&\hspace{-0.8em}\times(A+B_2K_2)x(k)-\tilde{x}'(k)\mathcal{B}'P_1\mathcal{B}\tilde{x}(k).
\end{eqnarray}
Substituting (\ref{17}) from $k=s$ to $k=M$ on both sides, we have
\begin{eqnarray}\label{18}
&\hspace{-0.8em}&\hspace{-0.8em} x'(s)P_1x(s)-x'(M+1)P_1x(M+1)\nonumber\\
=&\hspace{-0.8em}&\hspace{-0.8em} J_1(s, M)+\sum\limits^{M}_{k=s}\tilde{x}'(k)\left[
                                         \begin{array}{cc}
                                           K'_1R_{11}K_1 & 0 \\
                                           0 & K'_2R_{12}K_2 \\
                                         \end{array}
                                       \right]\tilde{x}(k)\nonumber\\
&\hspace{-0.8em}&\hspace{-0.8em}-\sum\limits^{M}_{k=s}\left[
  \begin{array}{c}
    x(k) \\
    \tilde{x}(k) \\
  \end{array}
\right]'\left[
          \begin{array}{cc}
            0 & T_1\\
            T'_1 & \mathcal{B}'P_1\mathcal{B}\\
          \end{array}
        \right]
\left[
  \begin{array}{c}
    x(k) \\
    \tilde{x}(k) \\
  \end{array}
\right].
\end{eqnarray}
According to Theorem \ref{thm2}, the stability of (\ref{1-1}) means that
\begin{eqnarray*}
 \lim_{M\rightarrow \infty} x'(M+1)P_1x(M+1)=0.
\end{eqnarray*}
Thus, following from (\ref{18}) and letting $M\rightarrow \infty$, (\ref{14-1}) can be obtained exactly.

The second part is to consider the cost function of the leader  under the observer-feedback controllers (\ref{10-1})-(\ref{10-2}), that is, we will show that $J^{\star}_2(s, \infty)$ satisfies (\ref{14-2}). Following from (\ref{16}), it derives
\begin{eqnarray}\label{19-1}
&\hspace{-0.8em}&\hspace{-0.8em} x'(k)P_2x(k)-x(k+1)'P_2x(k+1)\nonumber\\
=&\hspace{-0.8em}&\hspace{-0.8em}x'(k)[P_2-(A+B_2K_2)'M'_1P_2M_1(A+B_2K_2)]x(k)\nonumber\\
&\hspace{-0.8em}&\hspace{-0.8em}-x'(k)(A+B_2K_2)'M'_1P_2\mathcal{B}\tilde{x}(k)-\tilde{x}'(k)\mathcal{B}'P_2M_1(A\nonumber\\
&\hspace{-0.8em}&\hspace{-0.8em}+B_2K_2)x(k)
-\tilde{x}'(k)\mathcal{B}'P_2\mathcal{B}\tilde{x}(k)\nonumber\\
=&\hspace{-0.8em}&\hspace{-0.8em}x'(k)[Q_2+A'S'R_{21}SA-Y'_2\Gamma^{-1}_2Y_2\nonumber\\
&\hspace{-0.8em}&\hspace{-0.8em}-A'M'_1P_2M_1B_2K_2-K'_2B'_2M'_1P_2M_1A\nonumber\\
&\hspace{-0.8em}&\hspace{-0.8em}-K'_2B'_2M'_1P_2M_1B_2K_2]x(k)-\tilde{x}'(k)\mathcal{B}'P_2\mathcal{B}\tilde{x}(k)\nonumber\\
&\hspace{-0.8em}&\hspace{-0.8em}-x'(k) (A+B_2K_2)'M'_1P_2\mathcal{B}\tilde{x}(k)\nonumber\\
&\hspace{-0.8em}&\hspace{-0.8em}-\tilde{x}'(k)\mathcal{B}'P_2M_1(A+B_2K_2)x(k),
\end{eqnarray}
where the algebraic Riccati equation (\ref{8-2}) has been used in the derivation of the last equality.
For further optimization, we make the following derivation:
\begin{eqnarray}\label{19}
&\hspace{-0.8em}&\hspace{-0.8em} x'(k)P_2x(k)-x(k+1)'P_2x(k+1)\nonumber\\
=&\hspace{-0.8em}&\hspace{-0.8em}x'(k)[Q_2+K'_1R_{21}K_1+K'_2R_{22}K_2]x(k)\nonumber\\
&\hspace{-0.8em}&\hspace{-0.8em}+x'(k)[-(A+B_2K_2)'S'R_{21}S(A+B_2K_2)\nonumber\\
&\hspace{-0.8em}&\hspace{-0.8em}-K'_2R_{22}K_2+A'S'R_{21}SA-Y'_2\Gamma^{-1}_2Y_2\nonumber\\
&\hspace{-0.8em}&\hspace{-0.8em}-A'M'_1P_2M_1B_2K_2-K'_2B'_2M'_1P_2M_1A\nonumber\\
&\hspace{-0.8em}&\hspace{-0.8em}-K'_2B'_2M'_1P_2M_1B_2K_2]x(k)-x'(k)(A+B_2K_2)'\nonumber\\
&\hspace{-0.8em}&\hspace{-0.8em}\times M'_1P_2\mathcal{B}\tilde{x}(k)-\tilde{x}'(k)\mathcal{B}'P_2M_1(A+B_2K_2)x(k)\nonumber\\
&\hspace{-0.8em}&\hspace{-0.8em}-\tilde{x}'(k)\mathcal{B}'P_2\mathcal{B}\tilde{x}(k)\nonumber\\
=&\hspace{-0.8em}&\hspace{-0.8em}x'(k)[Q_2+K'_1R_{21}K_1+K'_2R_{22}K_2]x(k)\nonumber\\
&\hspace{-0.8em}&\hspace{-0.8em}-x'(k)(A+B_2K_2)'M'_1P_2\mathcal{B}\tilde{x}(k)-\tilde{x}'(k)\mathcal{B}'P_2M_1\nonumber\\
&\hspace{-0.8em}&\hspace{-0.8em}\times(A+B_2K_2)x(k)-\tilde{x}'(k)\mathcal{B}'P_2\mathcal{B}\tilde{x}(k).
\end{eqnarray}
Substituting (\ref{19}) from $k=s$ to $k=M$ on both sides, one has
\begin{eqnarray}\label{20}
&\hspace{-0.8em}&\hspace{-0.8em} x'(s)P_2x(s)-x'(M+1)P_2x(M+1)\nonumber\\
=&\hspace{-0.8em}&\hspace{-0.8em} J_2(s, M)+\sum\limits^{M}_{k=s}\tilde{x}'(k)\left[
                                         \begin{array}{cc}
                                           K'_1R_{21}K_1 & 0 \\
                                           0 & K'_2R_{22}K_2 \\
                                         \end{array}
                                       \right]\tilde{x}(k)\nonumber\\
&\hspace{-0.8em}&\hspace{-0.8em}-\sum\limits^{M}_{k=s}\left[
  \begin{array}{c}
    x(k) \\
    \tilde{x}(k) \\
  \end{array}
\right]'\left[
          \begin{array}{cc}
            0 & T_2\\
            T'_2 & \mathcal{B}'P_1\mathcal{B}\\
          \end{array}
        \right]
\left[
  \begin{array}{c}
    x(k) \\
    \tilde{x}(k) \\
  \end{array}
\right].
\end{eqnarray}
Due to $\lim_{M\rightarrow \infty} x'(M+1)P_2x(M+1)=0$, (\ref{14-2}) can be immediately obtained by letting $M\rightarrow \infty$ in (\ref{20}).

Moreover, together with Lemma \ref{lem1}, the optimal cost functions of (\ref{13-1})-(\ref{13-2}) under the optimal feedback Stackelberg strategy are given by
\begin{eqnarray}
J^*_1(s, \infty)&\hspace{-0.8em}=&\hspace{-0.8em} x'(s)P_1x(s),\label{sy-5-1}\\
J^*_2(s, \infty)&\hspace{-0.8em}=&\hspace{-0.8em} x'(s)P_2x(s).\label{sy-5-2}
\end{eqnarray}
Together with (\ref{14-1})-(\ref{14-2}), $\delta J_1(s, \infty)$ and $\delta J_2(s, \infty)$ in (\ref{15-1})-(\ref{15-2}) are obtained.
This completes the proof.
\end{proof}

Finally, we will show the asymptotical optimal property under the observer-feedback Stackelberg strategy (\ref{10-1})-(\ref{10-2}).
\begin{theorem}\label{thm4}
Under the condition of Theorem \ref{thm2}, the optimal cost functions (\ref{14-1})-(\ref{14-2}) under the observer-feedback Stackelberg strategy (\ref{10-1})-(\ref{10-2}) are asymptotical optimal to the optimal cost functions (\ref{sy-5-1})-(\ref{sy-5-2})  under the optimal feedback Stackelberg strategy (\ref{5-1})-(\ref{5-2}), that is to say, for any $\varepsilon >0$, there exists a sufficiency large integer $N$ for $i=1, 2$ such that
\begin{eqnarray}
 \delta J_i(N, \infty)< \varepsilon.
\end{eqnarray}
\end{theorem}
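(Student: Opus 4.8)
The plan is to recognise $\delta J_i(N,\infty)$ as the tail of an absolutely convergent series and to control that tail through the exponential stability already established in Theorems~\ref{thm1} and~\ref{thm2}. First I would collect the combined state into $\xi(k)=\left[\begin{array}{cc}x'(k) & \tilde{x}'(k)\end{array}\right]'$ and abbreviate the weight matrix appearing in (\ref{15-1})--(\ref{15-2}) by $W_i=\left[\begin{array}{cc}0 & T_i\\ T_i' & S_i\end{array}\right]$, so that Theorem~\ref{thm3} reads
\begin{eqnarray*}
\delta J_i(N,\infty) &=& \sum_{k=N}^{\infty}\xi'(k)W_i\xi(k),\quad i=1,2.
\end{eqnarray*}
It therefore suffices to show that this tail tends to $0$ as $N\to\infty$.

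The decisive observation is that, by (\ref{sy-4}), $\xi(k)=\bar{A}^{k}\xi(0)$ with $\bar{A}$ a stable matrix. Indeed $\bar{A}$ is block upper-triangular with diagonal blocks $A+B_1K_1+B_2K_2$ and $\mathcal{A}$: the former is stable by the stabilizability of $(A,B)$ together with the observability of $(A,Q_i)$, while the latter is rendered stable by the gains $L_1,L_2$ chosen in Theorem~\ref{thm1}. Hence every eigenvalue of $\bar{A}$ lies strictly inside the unit circle, and consequently there exist a constant $C>0$ and a number $\rho\in(0,1)$ with $\|\bar{A}^{k}\|\leq C\rho^{k}$ for all $k\geq0$. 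Since $W_i$ is a fixed matrix, each summand then obeys
\begin{eqnarray*}
|\xi'(k)W_i\xi(k)| &\leq& \|W_i\|\,\|\xi(k)\|^2\\
&\leq& \|W_i\|\,C^2\,\rho^{2k}\,\|\xi(0)\|^2,
\end{eqnarray*}
which is the general term of a convergent geometric series.

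Summing this geometric bound from $k=N$ to $\infty$ I would obtain
\begin{eqnarray*}
|\delta J_i(N,\infty)| &\leq& \frac{\|W_i\|\,C^2\,\|\xi(0)\|^2}{1-\rho^2}\,\rho^{2N},
\end{eqnarray*}
and because $\rho^{2N}\to0$ as $N\to\infty$, for any prescribed $\varepsilon>0$ one may pick $N$ so large that the right-hand side is below $\varepsilon$; this yields $\delta J_i(N,\infty)<\varepsilon$ for $i=1,2$ simultaneously and completes the argument.

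The calculation is routine once the decay rate is in hand, so the only delicate point is the uniform exponential estimate $\|\bar{A}^{k}\|\leq C\rho^{k}$: it rests entirely on the \emph{joint} stability of the two diagonal blocks of $\bar{A}$, which is exactly what Theorems~\ref{thm1} and~\ref{thm2} provide. I would stress that the vanishing upper-left block of $W_i$ is not actually required for the estimate, since the full vector $\xi(k)$ already decays; rather, it records the structural fact that the excess cost is driven solely by the estimation error $\tilde{x}(k)$, so that $\delta J_i$ would vanish identically were the observers exact.
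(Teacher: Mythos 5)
Your proposal is correct and follows essentially the same route as the paper's own proof: both invoke the stability of $\bar{A}$ from Theorem \ref{thm2} to obtain an exponential decay bound $\|\xi(k)\|\leq c\lambda^{k}\|\xi(0)\|$, bound each term of $\delta J_i(N,\infty)$ by $\|W_i\|\,\|\xi(k)\|^{2}$, and sum the resulting geometric series so the tail falls below any prescribed $\varepsilon$. Your explicit block-triangular justification of $\bar{A}$'s stability and your use of absolute values on the (possibly indefinite) quadratic form are minor refinements of the same argument, not a different approach.
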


\begin{proof}
Following from Theorem \ref{thm2}, there exists a stable matrix $\bar{A}$.
Thus, by \cite{Rami2001}, there exist constants $0< \lambda <1$ and $c>0$ such that
\begin{eqnarray}\label{sy-6}
\Big\|\left[
    \begin{array}{c}
      x(k) \\
      \tilde{x}(k) \\
    \end{array}
  \right]
\Big\|\leq c \lambda^k \Big\|\left[
                       \begin{array}{c}
                         x(0) \\
                         \tilde{x}(0) \\
                       \end{array}
                     \right]\Big\|.
\end{eqnarray}
In this way, one has
\begin{eqnarray}\label{sy-7}
\delta J_i(N, \infty)&\hspace{-0.8em}=&\hspace{-0.8em}\sum\limits^{\infty}_{k=s} \left[
    \begin{array}{c}
      x(k) \\
      \tilde{x}(k) \\
    \end{array}
  \right]'\left[
          \begin{array}{cc}
            0 & T_i\\
            T'_i & S_i\\
          \end{array}
        \right]
\left[
  \begin{array}{c}
    x(k) \\
    \tilde{x}(k) \\
  \end{array}
\right]\nonumber\\
\leq&\hspace{-0.8em}&\hspace{-0.8em}\sum\limits^{\infty}_{k=s}\Big\|\left[
          \begin{array}{cc}
            0 & T_i\\
            T'_i & S_i\\
          \end{array}
        \right]\Big\|\Big\|\left[
  \begin{array}{c}
    x(k) \\
    \tilde{x}(k) \\
  \end{array}
\right]\Big\|^2\nonumber\\
\leq&\hspace{-0.8em}&\hspace{-0.8em}\sum\limits^{\infty}_{k=s}\lambda^{2k}\cdot c^2\Big\|\left[
          \begin{array}{cc}
            0 & T_i\\
            T'_i & S_i\\
          \end{array}
        \right]\Big\|\Big\|\left[
                       \begin{array}{c}
                         x(0) \\
                         \tilde{x}(0) \\
                       \end{array}
                     \right]\Big\|^2\nonumber\\
<&\hspace{-0.8em}&\hspace{-0.8em}  \frac{\lambda^{2s}}{1-\lambda^2} \cdot   c^2\Big\|\left[
          \begin{array}{cc}
            0 & T_i\\
            T'_i & S_i\\
          \end{array}
        \right]\Big\|\Big\|\left[
                       \begin{array}{c}
                         x(0) \\
                         \tilde{x}(0) \\
                       \end{array}
                     \right]\Big\|^2  \nonumber\\
\doteq  &\hspace{-0.8em}&\hspace{-0.8em} \bar{c} \lambda^{2s}.
\end{eqnarray}
Since $0< \lambda <1$, thus there exists a sufficiency large integer $N$ such that for any $\varepsilon >0$, satisfying
\begin{eqnarray*}
\lambda^{2N}<\frac{1}{\bar{c}+1} \varepsilon.
\end{eqnarray*}
Combing with (\ref{sy-7}), one has
\begin{eqnarray}\label{sy-8}
\delta J_i(N, \infty) < \frac{\bar{c}}{\bar{c}+1} \varepsilon< \varepsilon.
\end{eqnarray}
That is to say,  the cost functions (\ref{14-1})-(\ref{14-2}) under the observer-feedback Stackelberg strategy (\ref{10-1})-(\ref{10-2}) are asymptotical optimal to the cost functions (\ref{sy-5-1})-(\ref{sy-5-2})  under the optimal feedback Stackelberg strategy (\ref{5-1})-(\ref{5-2}) when the integer $N$ is large enough. The proof is now completed.
\end{proof}

\section{Numerical Examples}
To show the validity of the results in Theorem \ref{thm1} to Theorem \ref{thm4}, the following example is presented.
Consider system (\ref{1-1})-(\ref{1-3}) with
\begin{eqnarray*}
  A &\hspace{-0.8em}=&\hspace{-0.8em} \left[
          \begin{array}{cc}
            1 & -0.7 \\
            1 & -0.3 \\
          \end{array}
        \right], \quad B_1=\left[
                   \begin{array}{c}
                     -5 \\
                     -1 \\
                   \end{array}
                 \right], \\
 B_2&\hspace{-0.8em}=&\hspace{-0.8em}\left[
                   \begin{array}{c}
                    0 \\
                     1 \\
                   \end{array}
                 \right], \quad   H_1=\left[
                       \begin{array}{cc}
                         1 & 0 \\
                       \end{array}
                     \right], \quad H_2=\left[
                       \begin{array}{cc}
                         0 & 1 \\
                       \end{array}
                     \right],
\end{eqnarray*}
and the associated cost functions (\ref{2-1})-(\ref{2-2}) with
\begin{eqnarray*}
Q_1 &\hspace{-0.8em}=&\hspace{-0.8em} \left[
          \begin{array}{cc}
            1 & 0 \\
            0 & 1 \\
          \end{array}
        \right], \quad Q_2 = \left[
          \begin{array}{cc}
            2 & 0 \\
            0 & 1 \\
          \end{array}
        \right],\\
R_{11}&\hspace{-0.8em}=&\hspace{-0.8em}1, \quad R_{11}=2,\quad R_{21}=0,\quad R_{22}=1.
\end{eqnarray*}

By decoupled solving the algebraic Riccati equations (\ref{8-1})-(\ref{8-2}), the feedback gains in (\ref{6-1})-(\ref{6-2}) are respectively calculated as
\begin{eqnarray*}
  K_1 &\hspace{-0.8em}=&\hspace{-0.8em} \left[
            \begin{array}{cc}
              0.2028 & -0.1374 \\
            \end{array}
          \right],\\
  K_2 &\hspace{-0.8em}=&\hspace{-0.8em} \left[
            \begin{array}{cc}
               -0.4005 &  0.0791 \\
            \end{array}
          \right].
\end{eqnarray*}
By using the LMI Toolbox in Matlab, $L_i$ ($i=1, 2$) are calculated as
\begin{eqnarray*}
L_1=\left[
                   \begin{array}{c}
                     1.2364 \\
                     0.4246 \\
                   \end{array}
                 \right], \quad L_2=\left[
                   \begin{array}{c}
                    0.0039 \\
                    0.1925 \\
                   \end{array}
                 \right],
\end{eqnarray*}
while the four eigenvalues of matrix $\mathcal{A}$ are calculated as:
\begin{eqnarray*}
  \lambda_1(\mathcal{A}) &\hspace{-0.8em}=&\hspace{-0.8em}0.1949, \quad \lambda_2(\mathcal{A})=0.6791,\\
  \lambda_3(\mathcal{A}) &\hspace{-0.8em}=&\hspace{-0.8em}\lambda_4(\mathcal{A})=0.7317,
\end{eqnarray*}
which means that $\mathcal{A}$ in (\ref{sy-11}) is sable.
In this way, following from Theorem \ref{thm1}, the state error estimation $\tilde{x}(k)$ in (\ref{12-1}) is stable, which is shown in Fig. \ref{Fig1}, where data 1 to data 4 represent the four components of vector $\tilde{x}(k)\doteq \left[
                                           \begin{array}{cccc}
                                             \tilde{x}_{11}(k) &\hspace{-0.5em} \tilde{x}_{21}(k) &\hspace{-0.5em} \tilde{x}_{31}(k) &\hspace{-0.5em} \tilde{x}_{41}(k) \\
                                           \end{array}
                                         \right]'
$. Moreover, under the observer-feedback Stackelberg strategy (\ref{10-1})-(\ref{10-2}), the state $x(k)$ in (\ref{1-1}) is also stable which can be seen in Fig. \ref{Fig2}, where data 1 and data 2 represent the two components of $x(k)\doteq \left[
                                                                                       \begin{array}{cc}
                                                                                         x_{11}(k) &\hspace{-0.5em} x_{21}(k)\\
                                                                                       \end{array}
                                                                                     \right]'$.
Finally, by analyzing Fig. \ref{Fig1} and Fig. \ref{Fig2} and selecting $N=30$ in Theorem \ref{thm4}, the asymptotical optimal property of the cost functions (\ref{14-1})-(\ref{14-2}) under the observer-feedback Stackelberg strategy (\ref{10-1})-(\ref{10-2}) is guaranteed.

%

\begin{figure}[htbp]
\centering
\includegraphics[width=3.1in, height=2.4in]{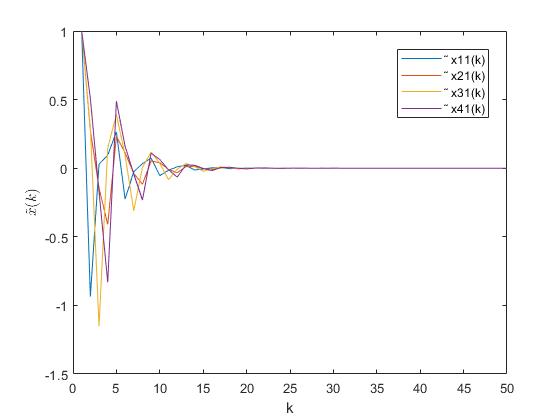}
\caption{\footnotesize Trajectory of $\tilde{x}(k)$ in (\ref{12-1}) under the observer-feedback Stackelberg strategy (\ref{10-1})-(\ref{10-2}).}
\label{Fig1}
\end{figure}

\begin{figure}[htbp]
\centering
\includegraphics[width=3.1in, height=2.4in]{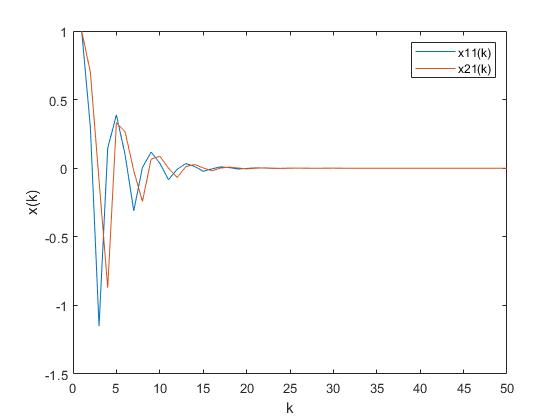}
\caption{\footnotesize Trajectory of $x(k)$ in (\ref{1-1}) under the observer-feedback Stackelberg strategy (\ref{10-1})-(\ref{10-2}).}
\label{Fig2}
\end{figure}

\section{Conclusion}
In this paper, we have considered the feedback Stackelberg strategy for two-player leader-follower game with private inputs, where the follower only shares its measurement informaiton with the leader, while none of the historical control inputs and measurement information of the leader are shared with the follower due to the hierarchical relationship.
The unavaliable access of the historical inputs for both controllers causes the main difficulty. The obstacle is overcome by designing the observers based on the informaiton structure and the observer-feedback Stackelberg strategy. Moreover, we have shown that the cost functions under the proposed observer-feedback Stackelberg strategy are asymptotical optimal to the cost functions under the optimal feedback Stackelberg strategy.

\end{document}